\newcommand{\IQC}{\ensuremath{ \usftext{IQC}  }}
\newcommand{\CQC}{\ensuremath{ \usftext{CQC}  }}
\newcommand{\Lo}{\ensuremath{ \usftext{L}  }}
\newcommand{\MQC}{\ensuremath{ \usftext{MQC}  }}
\title{A Kuroda-style $j$-translation}
\author{Benno van den Berg}
  \address{Institute for Logic, Language and Computation (ILLC), University of Amsterdam, P.O. Box 94242, 1090 GE Amsterdam, the Netherlands. E-mail: bennovdberg@gmail.com.}
\date{\today}
\begin{document}

\begin{abstract}
 In topos theory it is well-known that any nucleus $j$ gives rise to a translation of intuitionistic logic into itself in a way which generalises the G\"odel-Gentzen negative translation. Here we show that there exists a similar $j$-translation which is more in the spirit of Kuroda's negative translation. The key is to apply the nucleus not only to the entire formula and universally quantified subformulas, but to conclusions of implications as well. The development is entirely syntactic and no knowledge of topos theory is required to read this small note. 
\end{abstract}

\maketitle

\section{Introduction}

Since everything which is provable in intuitionistic logic is also classically provable, but not \emph{vice versa}, it is natural to think of intuitionistic logic as a weakening of classical logic. Nevertheless, it is also possible to think of intuitionistic logic as an extension of classical logic, because there exist faithful copies of classical logic inside intuitionistic logic. Such an isomorphic copy of classical logic inside intuitionistic logic is called a \emph{negative translation}.

Let us define a negative translation more formally as a mapping $\varphi \mapsto \varphi^*$ which sends formulas in predicate logic to formulas in predicate logic such that the following two statements hold for all formulas $\varphi, \varphi_1,\ldots,\varphi_n$ and $\psi$:
\begin{enumerate}
\item $\vdash_{\CQC} \varphi \leftrightarrow \varphi^*$
\item If $\varphi_1,\ldots,\varphi_n \vdash_{\CQC} \psi$, then $(\varphi_1)^*,\ldots,(\varphi_n)^* \vdash_{\IQC} \psi^*$.
\end{enumerate}
(Here $\CQC$ stands for classical predicate logic, while $\IQC$ stands for intuitionistic predicate logic. Note that (1) implies that the converse of (2) holds as well.) One well-known negative translation is the G\"odel-Gentzen negative translation which puts double negations ($\lnot\lnot$) in front of atomic formulas, disjunctions and existential quantifiers. A lesser known negative translation is due to Kuroda which puts double negations in front of the entire formula and behind universal quantifiers \cite{kuroda51}. In a sense, these two translations are only syntactic variants of each other, because the G\"odel-Gentzen and Kuroda negative translation of some formula are intuitionistically equivalent. Since these negative translation extend to systems stronger than logic (like arithmetic, for instance), they can be used to prove equiconsistency and conservativity results for classical systems over intuitionistic analogues.

In topos theory the G\"odel-Gentzen negative translation has been generalised to general \emph{nuclei} (also known as local operators, or Lawvere-Tierney topologies). Essentially, a nucleus is an operation on the collection of truth values which is monotone, inflationary, idempotent and commutes with conjunction, with double negation being a prime example. Thinking of such a nucleus $j$ as an operation on formulas, we can define for each formula $\varphi$ a new formula $\varphi^j$ which is obtained by applying the $j$-operation to atomic formulas, disjunctions and existential quantifiers, as in the G\"odel-Gentzen negative translation. The result is no longer a negative translation; however, because we do still have
\[ \varphi_1,\ldots,\varphi_n \vdash_{\IQC} \psi \Longrightarrow (\varphi_1)^j,\ldots,(\varphi_n)^j \vdash_{\IQC} \psi^j, \]
it provides a translation of intuitionistic logic into itself (often called the $j$-translation).

This much is well known. What does not seem to have been observed before is that something similar is possible for the Kuroda negative translation. So let $j$ be a nucleus and let $\varphi_j$ be the result of putting $j$ in front of the entire formula, behind universal quantifiers and \emph{in front of conclusions of implications}. Then we have
\[ \varphi_1,\ldots,\varphi_n \vdash_{\IQC} \psi \Longrightarrow (\varphi_1)_j,\ldots,(\varphi_n)_j \vdash_{\IQC} \psi_j, \]
so we again have a translation of intuitionistic logic into itself, a Kuroda-style $j$-translation. In fact, $\varphi_j$ and $\varphi^j$ are intuitionistically equivalent. The fact that $j$ also has to be applied to conclusions of implications is related to the fact that the Kuroda negative translation does not work as a translation of classical logic into minimal logic (constructive logic without the \emph{ex falso} rule $\bot \to \varphi$), but the modified Kuroda translation which also puts double negations in front of conclusions of implications does (as was apparently first observed by Ferreira and Oliva, see 
\cite[Section 6]{ferreiraoliva12}). We will explain all of this more precisely below. 

It turns out that the Kuroda-style $j$-translation explains some initially puzzling phenomena in the literature, of which I will give a few examples, so I hope some readers may find this little note illuminating. I will not give many proofs, because most of them are routine inductions on the structure of a formula or a derivation.

\section{Nuclei}

Throughout this paper $\Lo$ is a logic which could be either $\CQC$ (classical predicate logic), $\IQC$ (intuitionistic predicate logic), or $\MQC$ (minimal predicate logic). We assume that these logics have been formulated with $\land, \lor, \to, \exists, \forall$ and $\bot$ as primitive, while $\lnot \varphi$ is defined as $\varphi \to \bot$. Minimal logic can be axiomatised in natural deduction-style using the standard introduction and elimination rules for the logical connectives, and with $\varphi$ following from $\varphi$ as the only axiom. In minimal logic $\bot$ has no special properties, so acts as an arbitrary propositional constant. Intuitionistic logic is obtained from minimal logic by adding \emph{ex falso} $\bot \to \varphi$, while classical logic is obtained from intuitionistic logic by adding the law of excluded middle $\varphi \lor \lnot\varphi$ or double negation elimination $\lnot\lnot\varphi \to \varphi$.

\begin{defi}{nucleus}
Let $j$ be a function mapping formulas in predicate logic to formulas in predicate logic. Such a mapping will be called a \emph{nucleus} (relative to the logic $\Lo$) if for all formulas $\varphi$ and $\psi$ the following statements are provable in $\Lo$:
\[ \begin{array}{l}
\vdash_{\Lo} \varphi \to j \varphi \\
\vdash_{\Lo} j (\varphi \land \psi) \leftrightarrow ( \, j \varphi \land j \psi \, ) \\
\vdash_{\Lo} (\varphi \to j \psi) \to (j \varphi \to j \psi) \\
\vdash_{\Lo} (j\varphi)[t/x] \leftrightarrow j(\varphi[t/x])
\end{array} \]
\end{defi}

\begin{exam}{examplesofnuclei} The main example of a nucleus is the double-negation nucleus \[ j \varphi := \lnot\lnot \varphi; \] more generally, we have that $j \varphi := (\varphi \to A) \to A$ is a nucleus for any fixed sentence $A$ (the double-negation nucleus being the special case where $A = \bot$). Other examples of nuclei are 
\begin{eqnarray*}
j \varphi & := & \varphi \lor A \\
j \varphi & := & A \to \varphi \\
j \varphi & := & (\varphi \to A) \to \varphi
\end{eqnarray*} for any fixed sentence $A$. Note that all of these examples are nuclei over $\MQC$ already.
\end{exam}

\begin{lemm}{basicpropofnuclei}
For any nucleus we can prove:
\begin{displaymath}
\begin{array}{l}
\vdash_{\Lo} (\varphi \to \psi) \to (j \varphi \to j \psi) \\
\vdash_{\Lo} j \varphi \leftrightarrow j j \varphi \\
\vdash_{\Lo} j(\varphi \to j \psi) \leftrightarrow (j \varphi \to j \psi) \\
\vdash_{\Lo} j (j \varphi \lor j \psi) \leftrightarrow j (\varphi \lor \psi) \\
\vdash_{\Lo} j (\exists x \, j \varphi) \leftrightarrow j (\exists x \varphi) \\
\vdash_{\Lo} j (\forall x \, j \varphi) \leftrightarrow \forall x \, j \varphi
\end{array}
\end{displaymath}
\end{lemm}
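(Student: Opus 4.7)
The plan is to prove the six properties in the order listed, since each uses those above it; the only ingredients are the four defining clauses of a nucleus, which I will abbreviate as inflation, conjunction, transport, and substitution. All reasoning is minimal-logic, so it works uniformly for $\Lo \in \{\MQC, \IQC, \CQC\}$.

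For monotonicity, given $\varphi \to \psi$, I compose with inflation to obtain $\varphi \to j\psi$ and then apply transport to get $j\varphi \to j\psi$. For idempotence, the forward direction is inflation applied to the formula $j\varphi$, while the backward direction is transport applied to the trivial implication $j\varphi \to j\varphi$.

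The third statement is the crux. The right-to-left direction is easy: from $j\varphi \to j\psi$, pre-composing with inflation yields $\varphi \to j\psi$, and a further application of inflation gives $j(\varphi \to j\psi)$. The left-to-right direction uses the conjunction axiom essentially: assuming $j(\varphi \to j\psi)$ and $j\varphi$, the conjunction axiom gives $j((\varphi \to j\psi) \land \varphi)$; combining the tautology $(\varphi \to j\psi) \land \varphi \to j\psi$ with monotonicity (just proved) yields $jj\psi$, which idempotence collapses to $j\psi$.

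The remaining three fall out uniformly. For disjunction, monotonicity applied to the two injection implications gives $j\varphi \lor j\psi \to j(\varphi \lor \psi)$, and a final application of $j$ together with idempotence yields one direction; the other direction uses inflation on $\varphi$ and $\psi$ inside the disjunction, followed by monotonicity. The existential case is entirely analogous, with the substitution axiom handling the passage of $j$ past the bound variable. For the universal case, one direction is just inflation; the other uses universal instantiation to reduce $\forall x\, j\varphi$ to $j\varphi$, monotonicity to pass $j$ through to $jj\varphi$, idempotence to collapse this to $j\varphi$, and universal generalization (valid because $x$ is not free in $j(\forall x\, j\varphi)$) to close. The main obstacle is clearly (3), since it is the only step where the conjunction axiom is invoked non-trivially and must be combined with both monotonicity and idempotence; everything else is a routine forward push of $j$ past the connectives.
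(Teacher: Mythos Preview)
Your proof is correct and follows essentially the same approach as the paper, which likewise singles out item (3) as the only nontrivial case and proves both directions via the conjunction axiom combined with the tautology $((\varphi \to j\psi) \land \varphi) \to j\psi$. The only cosmetic difference is that in the left-to-right direction of (3) the paper applies transport directly to obtain $j\psi$ (since the consequent already has the form $j\psi$), whereas you pass through monotonicity to $jj\psi$ and then collapse via idempotence; these are interchangeable.
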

\begin{proof}
We only prove the third item, leaving the others (which are easier) to the reader.

Since 
\[ \vdash_{\Lo} ((\varphi \to j\psi) \land \varphi) \to j\psi \]
the third and second axiom for a nucleus imply
\[ \vdash_{\Lo} (j(\varphi \to j\psi) \land j\varphi) \to j\psi \]
which is equivalent to
\[ \vdash_{\Lo} j(\varphi \to j \psi) \to (j \varphi \to j \psi). \]

Conversely, the first axiom for a nucleus implies
\[ \vdash_{\Lo} (\varphi \to j\psi) \to j(\varphi \to j\psi) \]
which in combination with 
\[ \vdash_{\Lo} \varphi \to j\varphi \]
implies 
\[ \vdash_{\Lo}  (j\varphi \to j\psi) \to j(\varphi \to j\psi). \]
\end{proof}

\section{The G\"odel-Gentzen-style $j$-translation}

\begin{defi}{godelgentzenj}
Assume $j$ is a nucleus. We define $\varphi^j$ to be the formula defined by induction on the structure of $\varphi$ as follows:
\begin{eqnarray*}
\varphi^j & := & j \varphi \qquad \mbox{ if } \varphi \mbox{ is an atomic formula } \\
(\varphi \land \psi)^j & := & \varphi^j \land \psi^j \\
(\varphi \lor \psi)^j & := & j (\varphi^j \lor \psi^j) \\
(\varphi \to \psi)^j & := & \varphi^j \to \psi^j \\
(\exists x \, \varphi)^j & := & j ( \exists x \, \varphi^j) \\
(\forall x \, \varphi)^j & := & \forall x \, \varphi^j
\end{eqnarray*}
\end{defi}

Then we have:
\begin{prop}{translation}
\begin{enumerate}
\item[(a)] For any formula $\varphi$ we have $\vdash_{\Lo} j (\varphi^j) \leftrightarrow \varphi^j$.
\item[(b)] $\varphi_1, \ldots, \varphi_n \vdash_{\Lo} \psi$ implies $\varphi_1^j, \ldots, \varphi_n^j \vdash_{\Lo} \psi^j$.
\end{enumerate}
\end{prop}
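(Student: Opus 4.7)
The plan is to prove (a) first by induction on the structure of $\varphi$, and then to use (a) as a lemma when proving (b) by induction on a natural-deduction derivation of $\varphi_1, \ldots, \varphi_n \vdash_{\Lo} \psi$.

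For (a), the atomic case is immediate from $\vdash_{\Lo} jj\varphi \leftrightarrow j\varphi$ (second item of the preceding lemma). The cases $\lor$ and $\exists$ are equally immediate, because their translations already have the shape $j(\cdots)$. The conjunction case follows from the second nucleus axiom combined with the induction hypothesis applied to each conjunct. The real work is in the cases $\to$ and $\forall$: for $(\varphi \to \psi)^j = \varphi^j \to \psi^j$, the induction hypothesis gives $\vdash_{\Lo} \psi^j \leftrightarrow j\psi^j$ and $\vdash_{\Lo} \varphi^j \leftrightarrow j\varphi^j$, so $\varphi^j \to \psi^j$ is equivalent to $j\varphi^j \to j\psi^j$, which by the third item of the preceding lemma is equivalent to $j(\varphi^j \to j\psi^j)$, and hence to $j(\varphi^j \to \psi^j)$. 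The universal case runs along the same lines: push $j$ inside using the induction hypothesis to rewrite $\forall x \varphi^j$ as $\forall x \, j\varphi^j$, and then absorb the outer $j$ using the last item of the preceding lemma.

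For (b), I would proceed by induction on the derivation. Before starting, I would record the substitution lemma $(\varphi[t/x])^j = \varphi^j[t/x]$, by an easy induction on $\varphi$ that leans on the fourth nucleus axiom; this is what makes the quantifier rules involving substitution go through. The introduction and elimination rules for $\land$, $\to$, and $\forall$ then translate directly, since the translation preserves their shape. The introduction rules for $\lor$ and $\exists$ are handled by applying the ordinary rule and then appealing to $\chi \to j\chi$ to install the outermost $j$. If $\Lo = \IQC$, the \emph{ex falso} rule is handled by first deriving $\bot \to \varphi^j$, then applying monotonicity (first item of the preceding lemma) to obtain $j\bot \to j\varphi^j$, and finally using (a) to rewrite this as $\bot^j \to \varphi^j$.

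The main obstacle will be the elimination rules for $\lor$ and $\exists$, where the outermost $j$ produced by translating the major premise must be absorbed into the conclusion, and this is exactly the place where part (a) earns its keep. For $\lor$-elimination, from translated premises $j(\varphi^j \lor \psi^j)$, $\varphi^j \vdash_{\Lo} \chi^j$ and $\psi^j \vdash_{\Lo} \chi^j$, ordinary $\lor$-elimination gives $\varphi^j \lor \psi^j \vdash_{\Lo} \chi^j$; monotonicity then lifts this to $j(\varphi^j \lor \psi^j) \vdash_{\Lo} j\chi^j$, and part (a) collapses $j\chi^j$ back to $\chi^j$. The $\exists$-elimination case is entirely parallel, and the eigenvariable side condition carries over unchanged because the translation introduces no new free variables.
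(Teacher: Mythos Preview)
Your proposal is correct and is precisely the argument the paper has in mind: the paper gives no proof of this proposition at all, remarking only in the introduction that such results are ``routine inductions on the structure of a formula or a derivation,'' and your two inductions are exactly those. Two tiny quibbles that do not affect the substance: the substitution lemma should strictly be stated as a provable equivalence rather than a syntactic identity (the fourth nucleus axiom only guarantees $\leftrightarrow$), and for $\Lo = \CQC$ you should also record that the translation of the classical axiom (excluded middle or double-negation elimination) is derivable in $\CQC$, which is immediate.
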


\begin{prop}{specificnuclei}
\begin{enumerate}
\item[(a)] If $j \varphi = (\varphi \to A) \to A$, then $\varphi_1, \ldots, \varphi_n \vdash_{ \CQC} \psi$ implies \[ \varphi_1^j, \ldots, \varphi_n^j \vdash_{\IQC} \psi^j. \]
\item[(b)] If $j \varphi = (\varphi \to \bot) \to \bot$, then $\varphi_1, \ldots, \varphi_n \vdash_{ \CQC} \psi$ implies $\varphi_1^j, \ldots, \varphi_n^j \vdash_{\MQC} \psi^j$.
\item[(c)] If $j \varphi = \varphi \lor \bot$, then $\varphi_1, \ldots, \varphi_n \vdash_{ \IQC} \psi$ implies $\varphi_1^j, \ldots, \varphi_n^j \vdash_{\MQC} \psi^j$.
\end{enumerate}
\end{prop}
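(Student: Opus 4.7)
My plan is to treat all three parts uniformly, by induction on a derivation in the source logic. The rules and axioms common to all three logics --- namely those of $\MQC$ --- are absorbed by Proposition \ref{translation}(b): since each nucleus in Proposition \ref{specificnuclei} is already a nucleus over $\MQC$ (as noted in Example \ref{examplesofnuclei}), applying Proposition \ref{translation}(b) with $\Lo = \MQC$ shows that every inference step of $\MQC$ translates to a valid $\MQC$-inference on translated formulas. What remains in each part is to verify that the $j$-translation of the \emph{additional} axiom schemas --- ex falso $\bot \to \varphi$ (when passing from $\MQC$ to $\IQC$) and double negation elimination $\lnot\lnot\varphi \to \varphi$ (when passing from $\IQC$ to $\CQC$) --- is derivable in the target logic.

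For the translation of DNE, which is relevant in (a) and (b), I would unfold $(\lnot\lnot\varphi \to \varphi)^j$ using Definition \ref{godelgentzenj}. In (a), after simplifying $\bot^j = (\bot \to A) \to A$ via the $\IQC$-equivalence $(\bot \to A) \to A \leftrightarrow A$, the translation reduces to $((\varphi^j \to A) \to A) \to \varphi^j$, which is precisely $j(\varphi^j) \to \varphi^j$ and hence provable in $\IQC$ by Proposition \ref{translation}(a). In (b), $\bot^j$ is already $\MQC$-equivalent to $\bot$, and the same unfolding yields $\lnot\lnot\varphi^j \to \varphi^j$, again supplied by Proposition \ref{translation}(a).

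For ex falso, relevant in (a), (b) and (c), the unifying trick is to exploit the $j$-closedness of $\varphi^j$: it suffices to derive $\bot \to j(\varphi^j)$ in the target logic, since $j(\varphi^j) \leftrightarrow \varphi^j$ by Proposition \ref{translation}(a). For (c), $\bot \to \varphi^j \lor \bot$ is immediate by $\lor$-introduction in $\MQC$; for (b), $\bot \to \lnot\lnot\varphi^j$ is derivable in $\MQC$ by assuming $\bot$, assuming $\varphi^j \to \bot$, and re-discharging $\bot$; for (a), $\bot \to ((\varphi^j \to A) \to A)$ follows from the trivially $\MQC$-provable $A \to (\varphi^j \to A) \to A$ together with $\bot \to A$, the latter being precisely where ex falso --- and hence $\IQC$ rather than $\MQC$ --- becomes unavoidable.

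The main point of care, and the only real obstacle, is this asymmetry between (a) and (b): both involve a nucleus of the form $(\cdot \to A) \to A$, yet (a) requires $\IQC$ as the target while (b) tolerates $\MQC$. The difference comes entirely from the simplification of $\bot^j = (\bot \to A) \to A$, which collapses to $\bot$ in $\MQC$ when $A = \bot$, but collapses to $A$ for general $A$ only in the presence of ex falso. Past this bookkeeping, every case reduces to a short calculation using Proposition \ref{translation} and the nucleus axioms of Definition \ref{nucleus}.
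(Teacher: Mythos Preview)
Your argument is correct. The paper does not actually supply a proof of this proposition; it only remarks in the introduction that most proofs are ``routine inductions on the structure of a formula or a derivation'' and leaves it at that. What you have written is precisely such a routine argument, organised cleanly: view a $\CQC$- (resp.\ $\IQC$-) derivation as an $\MQC$-derivation from instances of ex falso and double negation elimination (resp.\ ex falso alone), apply Proposition~\ref{translation}(b) over $\MQC$ to translate the $\MQC$ core, and then verify that the $j$-translations of the extra axiom schemas are provable in the target logic. Your reduction of $(\bot\to\varphi)^j = j\bot \to \varphi^j$ to $\bot \to j(\varphi^j)$ via Proposition~\ref{translation}(a) and the third nucleus axiom is exactly the right uniform trick, and your diagnosis of why~(a) genuinely requires $\IQC$ as target (the simplification of $\bot^j=(\bot\to A)\to A$ to $A$ needs ex falso when $A\neq\bot$) is accurate. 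There is nothing in the paper to compare your approach against; you have simply filled in what the author omitted.
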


\section{A Kuroda-style $j$-translation}

Our aim in this section is to give an alternative presentation of $\varphi^j$, which brings it closer to Kuroda's negative translation. We do this as follows:

\begin{defi}{kurodastylej}
Let $j$ be a nucleus. First we define $J\varphi$ by induction on $\varphi$, as follows:
\begin{eqnarray*}
J\varphi & ;= & \varphi \mbox{ if } \varphi \mbox{ is an atomic formula } \\
J(\varphi \land \psi) & := & J\varphi \land J\psi \\
J(\varphi \lor \psi) & := & J\varphi \lor J\psi \\
J(\varphi \to \psi) & := & J\varphi \to j(J\psi) \\
J(\exists x \, \varphi) & := & \exists x \, J(\varphi) \\
J(\forall x \, \varphi) & := & \forall x \, j (J\varphi)
\end{eqnarray*}
Finally, we set $\varphi_j := j(J\varphi)$.
\end{defi}

\begin{prop}{equivalence}
\begin{enumerate}
\item[(a)] We have $\vdash_{\Lo} \varphi^j \leftrightarrow \varphi_j$.
\item[(b)] $\varphi_1, \ldots, \varphi_n \vdash_{\Lo} \psi$ implies $J(\varphi_1), \ldots, J(\varphi_n) \vdash_{\Lo} \psi_j$.
\end{enumerate}
\end{prop}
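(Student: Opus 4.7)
The plan is to prove part (a) first by induction on the structure of $\varphi$, showing directly that $\vdash_{\Lo} \varphi^j \leftrightarrow j(J\varphi)$; part (b) will then follow almost immediately from (a) together with Proposition \ref{prop:translation}.

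For the induction in (a), the atomic and conjunction cases are immediate (the second using the multiplicativity axiom for $j$). The remaining cases each reduce to one line of Lemma \ref{lemm:basicpropofnuclei}:
\begin{itemize}
\item \textbf{Disjunction.} By the induction hypothesis, $(\varphi \lor \psi)^j = j(\varphi^j \lor \psi^j)$ is equivalent to $j(j(J\varphi) \lor j(J\psi))$, which by the fourth clause of Lemma \ref{lemm:basicpropofnuclei} equals $j(J\varphi \lor J\psi) = j(J(\varphi \lor \psi))$.
\item \textbf{Implication.} By the induction hypothesis, $\varphi^j \to \psi^j$ is equivalent to $j(J\varphi) \to j(J\psi)$, which by the third clause of Lemma \ref{lemm:basicpropofnuclei} equals $j(J\varphi \to j(J\psi)) = j(J(\varphi \to \psi))$.
\item \textbf{Existential.} By the induction hypothesis, $j(\exists x\, \varphi^j)$ is equivalent to $j(\exists x\, j(J\varphi))$, which by the fifth clause of Lemma \ref{lemm:basicpropofnuclei} equals $j(\exists x\, J\varphi) = j(J(\exists x\, \varphi))$.
\item \textbf{Universal.} By the induction hypothesis, $\forall x\, \varphi^j$ is equivalent to $\forall x\, j(J\varphi)$, which by the last clause of Lemma \ref{lemm:basicpropofnuclei} equals $j(\forall x\, j(J\varphi)) = j(J(\forall x\, \varphi))$.
\end{itemize}

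For (b), suppose $\varphi_1, \ldots, \varphi_n \vdash_{\Lo} \psi$. By Proposition \ref{prop:translation}(b) we obtain $\varphi_1^j, \ldots, \varphi_n^j \vdash_{\Lo} \psi^j$, and by (a) we may replace each $\varphi_i^j$ by $(\varphi_i)_j = j(J\varphi_i)$ and $\psi^j$ by $\psi_j$, yielding $(\varphi_1)_j, \ldots, (\varphi_n)_j \vdash_{\Lo} \psi_j$. Finally, the first nucleus axiom $\vdash_{\Lo} J(\varphi_i) \to j(J\varphi_i)$ lets us weaken the hypotheses from $j(J\varphi_i)$ to $J(\varphi_i)$, giving $J(\varphi_1), \ldots, J(\varphi_n) \vdash_{\Lo} \psi_j$ as required.

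The only mildly subtle point is the implication case of (a): we must translate an ordinary implication on the $\varphi^j$ side into an implication with a $j$-guarded conclusion on the $J$ side, and this is exactly what the identity $j(\varphi \to j\psi) \leftrightarrow (j\varphi \to j\psi)$ from Lemma \ref{lemm:basicpropofnuclei} is designed for. This identity is also the formal reason why placing $j$ in front of conclusions of implications is the correct move when defining the Kuroda-style translation.
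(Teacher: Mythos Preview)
Your proof is correct and follows exactly the route the paper sketches: part (a) by structural induction using the clauses of Lemma \ref{lemm:basicpropofnuclei}, and part (b) by reducing to Proposition \ref{prop:translation}(b) via (a) and the inflationarity axiom. Your write-up simply fills in the details the paper leaves to the reader.
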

\begin{proof}
Part (a) can be proved by a straightforward induction on the structure of $\varphi$ using \reflemm{basicpropofnuclei}. Part (b) can be proved directly by an induction on the derivation of $\varphi_1, \ldots, \varphi_n \vdash_{\Lo} \psi$, but it also follows from \refprop{translation}.
\end{proof}

\begin{prop}{specificnucleiagain}
\begin{enumerate}
\item[(a)] If $j \varphi = (\varphi \to A) \to A$, then $\varphi_1, \ldots, \varphi_n \vdash_{ \CQC} \psi$ implies \[ J(\varphi_1), \ldots, J(\varphi_n)_j \vdash_{\IQC} \psi_j. \]
\item[(b)] If $j \varphi = (\varphi \to \bot) \to \bot$, then $\varphi_1, \ldots, \varphi_n \vdash_{ \CQC} \psi$ implies $J(\varphi_1), \ldots, J(\varphi_n) \vdash_{\MQC} \psi_j$.
\item[(c)] If $j \varphi = \varphi \lor \bot$, then $\varphi_1, \ldots, \varphi_n \vdash_{ \IQC} \psi$ implies $J(\varphi_1), \ldots, J(\varphi_n) \vdash_{\MQC} \psi_j$.
\end{enumerate}
\end{prop}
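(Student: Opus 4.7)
The plan is to deduce this proposition from its G\"odel--Gentzen counterpart \refprop{specificnuclei}, together with the equivalence $\varphi^j \leftrightarrow \varphi_j$ provided by \refprop{equivalence}(a). The key preliminary observation is that each of the three specific nuclei appearing in the statement is a nucleus already over $\MQC$ (as noted in \refexam{examplesofnuclei}), so the conclusions of \reflemm{basicpropofnuclei}, and hence \refprop{equivalence}(a), are in fact provable in minimal logic, regardless of which ambient logic $\Lo$ one is working in.

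Write $L_1$ for the weaker of the two logics occurring in each clause, that is, $L_1 = \IQC$ in (a) and $L_1 = \MQC$ in (b) and (c). Given a derivation of $\psi$ from $\varphi_1, \ldots, \varphi_n$ in the stronger logic, the corresponding clause of \refprop{specificnuclei} supplies
\[ \varphi_1^j, \ldots, \varphi_n^j \vdash_{L_1} \psi^j. \]
Since $\varphi^j \leftrightarrow \varphi_j$ is provable in $L_1$ by the observation above, one rewrites this as
\[ \varphi_{1,j}, \ldots, \varphi_{n,j} \vdash_{L_1} \psi_j. \]
Finally, because $\varphi_{i,j}$ is by definition $j(J\varphi_i)$, the first nucleus axiom gives $\vdash_{L_1} J\varphi_i \to \varphi_{i,j}$, and cutting on each hypothesis yields
\[ J(\varphi_1), \ldots, J(\varphi_n) \vdash_{L_1} \psi_j, \]
which is the desired conclusion.

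There is no genuine obstacle here, since all the substantive work has been done in \refprop{specificnuclei} and \refprop{equivalence}. The only point deserving care is confirming that \refprop{equivalence}(a) really is available in the weaker logic $L_1$; by inspection of the proofs this reduces to the fact that the three nuclei in question are nuclei already over $\MQC$, which was noted explicitly in \refexam{examplesofnuclei}. A self-contained proof by induction on the derivation of $\varphi_1, \ldots, \varphi_n \vdash \psi$, modelled on the proof of \refprop{equivalence}(b) and tracking how the excluded middle (respectively \emph{ex falso}) is witnessed under $J$ for each specific choice of $j$, is also possible but considerably longer than the reduction above.
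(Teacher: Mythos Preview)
Your argument is correct and is exactly the derivation the paper has in mind: the proposition is stated without proof precisely because it is meant to follow immediately from \refprop{specificnuclei} together with \refprop{equivalence}(a) (applied with $\Lo=\MQC$, which is legitimate since the nuclei involved are nuclei over $\MQC$), and then the first nucleus axiom to pass from $J(\varphi_i)$ to $(\varphi_i)_j$. Your care in checking that \refprop{equivalence}(a) is available in the weaker target logic is the only non-automatic point, and you handled it correctly.
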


\begin{rema}{aboutkuroda} Some nuclei commute with implication in the sense that
\[ \vdash_{\Lo} j(\varphi \to \psi) \leftrightarrow (j \varphi \to j \psi); \]
for instance, the double-negation nucleus commutes with implication in intuitionistic logic, but not in minimal logic. In case the nucleus commutes with implication, the clause for the implication can be modified to
\[ J(\varphi \to \psi) := J\varphi \to J\psi \]
and the previous propositions still hold. Indeed, this is what happens in Kuroda's original negative translation.
\end{rema}

The following corollary is well-known and also has an easy semantic proof using Kripke models (and, indeed, Barr's Theorem gives a much stronger result, see, for example, \cite{maclanemoerdijk94}).

\begin{coro}{classlogicconsforcoherent}
If $\varphi$ and $\psi$ are coherent formulas (that is, formulas not containing universal quantifiers or implications), then $\varphi \vdash_{\CQC} \psi$ implies $\varphi \vdash_{\IQC} \psi$.
\end{coro}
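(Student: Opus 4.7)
The plan is to apply \refprop{specificnucleiagain}(a) to the Friedman-style nucleus
\[ j\chi := (\chi \to \psi) \to \psi, \]
where we first adjoin fresh constants for any free variables of $\psi$ so that $\psi$ may be treated as a sentence; this ensures the substitution clause in \refdefi{nucleus} is satisfied and makes $j$ a genuine nucleus over $\IQC$ (indeed over $\MQC$, by \refexam{examplesofnuclei}).

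Because $\varphi$ and $\psi$ are coherent, a straightforward induction on the structure of a coherent formula shows that the $J$-translation of \refdefi{kurodastylej} leaves it untouched: the only clauses that introduce a $j$ are those for $\to$ and $\forall$, and neither connective occurs in a coherent formula. So $J(\varphi) = \varphi$ and
\[ \psi_j = j(J\psi) = j(\psi) = (\psi \to \psi) \to \psi, \]
which is intuitionistically equivalent to $\psi$ since $\psi \to \psi$ is trivially derivable.

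From $\varphi \vdash_{\CQC} \psi$, \refprop{specificnucleiagain}(a) then yields $J(\varphi) \vdash_{\IQC} \psi_j$, that is, $\varphi \vdash_{\IQC} (\psi \to \psi) \to \psi$, and hence $\varphi \vdash_{\IQC} \psi$.

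The main (and very mild) obstacle is the choice of nucleus: it is crucial to set $A := \psi$ rather than $A := \bot$, because with the double-negation nucleus \refprop{specificnucleiagain}(b) would only give $\varphi \vdash_{\IQC} \lnot\lnot\psi$, and a coherent $\psi$ is not in general stable under double negation (e.g.\ disjunctions and existentials are not). Once the right nucleus is fixed, everything else is a routine unravelling of the definitions.
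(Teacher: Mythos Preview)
Your proof is correct and follows essentially the same approach as the paper: both replace free variables by fresh constants, apply \refprop{specificnucleiagain}(a) with the Friedman nucleus, use that $J$ is the identity on coherent formulas, and then specialize $A := \psi$. The only cosmetic difference is that you fix $A = \psi$ from the outset, whereas the paper first derives $\varphi \vdash_{\IQC} (\psi \to A) \to A$ for arbitrary $A$ and then chooses $A = \psi$.
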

\begin{proof}
Suppose $\varphi \vdash_{\CQC} \psi$ and without loss of generality we may assume that $\varphi$ and $\psi$ are sentences (replace free variables by fresh constants, if necessary). Part (a) from the previous proposition gives us
\[ \varphi \vdash_{\IQC} (\psi \to A) \to A \]
for any sentence $A$. Choosing $A = \psi$ we deduce that $\varphi \vdash_{\IQC} \psi$, as desired.
\end{proof}

The following corollary, however, is much less known. In fact, it seems it was first proved by Johansson in unpublished correspondence with Heyting (see \cite{vandermolen16,colacitoetal17}).

\begin{coro}{anewminimaltranslation}
If $\vdash_{\IQC} \varphi$ then $\vdash_{\MQC} J\varphi$ for the nucleus $j$ given by $j \varphi := \varphi \lor \bot$.
\end{coro}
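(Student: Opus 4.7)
The plan is to combine part (c) of \refprop{specificnucleiagain} with the disjunction property of minimal predicate logic.

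As a first step, I would replace the free variables of $\varphi$ (if any) by fresh constants, exactly as in the proof of the previous corollary, and thereby reduce to the case that $\varphi$, and hence $J\varphi$, is a sentence.

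Next, specialising part (c) of \refprop{specificnucleiagain} to the nucleus $j\chi := \chi \lor \bot$ with an empty list of hypotheses, the assumption $\vdash_{\IQC} \varphi$ yields $\vdash_{\MQC} \varphi_j$, which by the definitions of $(\cdot)_j$ and of $j$ is exactly $\vdash_{\MQC} J\varphi \lor \bot$.

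Finally, I would appeal to the disjunction property for $\MQC$ applied to the closed formulas $J\varphi$ and $\bot$: it gives either $\vdash_{\MQC} J\varphi$, in which case we are done, or $\vdash_{\MQC} \bot$, which contradicts the consistency of $\MQC$. The disjunction property follows from cut-elimination/normalisation for minimal predicate logic (in a normal proof of a disjunction with no open assumptions, the last rule must be a $\lor$-introduction, delivering a proof of one of the disjuncts), while consistency is witnessed by any model in which $\bot$ is interpreted as false. The main obstacle, such as it is, is that the disjunction property for $\MQC$ is not developed within the excerpt and has to be imported from the literature; I do not expect a direct induction on the $\IQC$-derivation to go through cleanly, because the $J$-translation of a modus ponens step only delivers $J\psi \lor \bot$ in place of $J\psi$, and precisely this slack is what the disjunction property absorbs.
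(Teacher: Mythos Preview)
Your argument is correct and follows essentially the same route as the paper: invoke part (c) of the preceding proposition with empty hypotheses to obtain $\vdash_{\MQC} J\varphi \lor \bot$, then conclude via the disjunction property and the consistency of $\MQC$. The reduction to sentences is a harmless extra precaution that the paper does not spell out.
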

\begin{proof}
Suppose $\vdash_{\IQC} \varphi$. Part (c) from the previous proposition gives us
\[ \vdash_{\MQC} J(\varphi) \lor \bot \]
for the nucleus $j$ given by $j \varphi := \varphi \lor \bot$. But since minimal logic has the disjunction property and does not prove $\bot$, it follows that $\vdash_{\MQC} J\varphi$, as desired.
\end{proof}

\section{The Kuroda-style $j$-translation in the literature}

The observations in the previous section explain several phenomena which one may observe in the literature. For instance, it explains Beeson's account of forcing in a constructive metatheory in his book \cite{beeson85}.

If $P$ is some partial order and we regard $P$ as an intuitionistic Kripke frame, one can define a nucleus internally to a Kripke model on $P$, as follows:
\begin{eqnarray*} p \Vdash j \varphi & :\Leftrightarrow & (\forall q \leq p) \, (\exists r \leq q) \, r \Vdash \varphi.
\end{eqnarray*}
Now define $p \Vdash_s \varphi$ to mean $p \Vdash J\varphi$ for this internal nucleus $j$. This results in the following clauses:
\begin{eqnarray*}
p \Vdash_s (\varphi \land \psi) & \leftrightarrow & p \Vdash_s \varphi \land p \Vdash_s \psi, \\
p \Vdash_s (\varphi \lor \psi) & \leftrightarrow & p \Vdash_s \varphi \lor p \Vdash_s \psi, \\
p \Vdash_s (\varphi \to \psi) & \leftrightarrow & (\forall q \leq p) \, \big( \, q \Vdash_s \varphi \to (\forall r \leq q) \, (\exists s \leq r) \, s \Vdash_s \psi \, \big)\\
p \Vdash_s \exists x \, \varphi & \leftrightarrow & (\exists x) \, p \Vdash_s \varphi \\
p \Vdash_s \forall x \, \varphi & \leftrightarrow & (\forall x) \, (\forall q \leq p) \, (\exists r \leq q) \, r \Vdash_s \varphi
\end{eqnarray*}
In fact, we can simplify the clause for the implication, because it is equivalent to:
\begin{eqnarray*}
p \Vdash_s (\varphi \to \psi) & \leftrightarrow & (\forall q \leq p) \, \big( \, q \Vdash_s \varphi \to (\exists r \leq q) \, r \Vdash_s \psi \, \big)
\end{eqnarray*}
So it follows from \refprop{equivalence} that we have $p \Vdash \varphi^j$ iff $(\forall q \leq p) \, (\exists r \leq q) \, r \Vdash_s \varphi$, and this is how Beeson uses forcing in his book.

The notation $p \Vdash_s \varphi$ is supposed to recall Cohen's ``strong forcing'' (see \cite{cohen66}). In fact, his notion of strong forcing can be obtained by changing the clause for the implication to
\begin{eqnarray*}
p \Vdash_s (\varphi \to \psi) & \leftrightarrow & (\forall q \leq p) \, \big( \, q \Vdash_s \varphi \to q\Vdash_s \psi \, \big),
\end{eqnarray*}
which works in a classical metatheory, because then one can prove that $j$ commutes with implication (see \refrema{aboutkuroda}). Indeed, in a classical metatheory $p \Vdash j\varphi$ is equivalent to $p \Vdash \lnot\lnot\varphi$. (The connection between Cohen's strong forcing and Kuroda's negative translation was pointed out by Jeremy Avigad \cite{avigad04}.)

Another definition which can be explained by the Kuroda-style $j$-translation is Definition 6.1 in \cite{leevanoosten13}. I do not doubt that some readers are able to come up with other examples as well.

\bibliographystyle{plain} \bibliography{dst}

\end{document}